\newtheorem{theorem}{Theorem}
\newtheorem{lemma}[theorem]{Lemma}
\newtheorem{proposition}[theorem]{Proposition}
\newtheorem{corollary}[theorem]{Corollary}
\theoremstyle{definition}
\newtheorem{definition}[theorem]{Definition}
\newtheorem{example}[theorem]{Example}
\begin{document}
\thispagestyle{empty}
\title{\textbf{\Large{Dimensions of semi-simple matrix algebras}}}
%
%
\author{\textsc{Phillip Heikoop}
				\thanks{\textit{E-mail: ptheikoop@wpi.edu}}\\
\textit{\footnotesize{School of Mathematical Sciences}}\\
\textit{\footnotesize{Worcester Polytechnic Institute, MA 01609, USA}}\\
\textsc{Padraig \'O Cath\'ain}
				\thanks{\textit{E-mail: pocathain@wpi.edu}}\\
\textit{\footnotesize{School of Mathematical Sciences}}\\
\textit{\footnotesize{Worcester Polytechnic Institute, MA 01609, USA}}\\
}
\maketitle

\begin{abstract} 

For $n \geq 255$ we show that every integer of the form $n + 2m$ such that $0
\leq 2m \leq n^2 - \frac{9}{2}n\sqrt{n}$ is the dimension of a connected
semi-simple subalgebra of $\mathrm{M}_n(k)$, that is, a subalgebra isomorphic to a
direct sum of $t$ disjoint subalgebras $\mathrm{M}_{n_i}(k)$, where $\sum_{i = 1}^t n_i =
n$. From this, we conclude that the density of integers in $\left[0, \dots,
    n^2\right]$ which are dimensions of a semi-simple subalgebra of
    $\mathrm{M}_n(k)$ tends
to $1$ as $n \rightarrow \infty$.

\end{abstract}

\section{Introduction}

Students of mathematics are introduced at undergraduate level to a number of
distinct subjects called \textit{algebra}. In pre-calculus
algebra\footnote{P\'{O}C: when I first came to the US, I often answered the question \textit{What do you teach?} with the response \textit{Algebra}. On one memorable occasion, the response (from a non-mathematician) was \textit{Oh, are you not smart enough to teach calculus?}} one solves polynomial equations. In linear algebra, one learns to multiply matrices and studies systems of linear equations. In abstract algebra, a course on group theory often introduces symmetry and homomorphisms, and courses on rings, fields (and possibly) Galois theory finally return to the solution of polynomial equations.

In this paper we will attempt to pull together ideas from all of these algebraic subjects: we will introduce an (associative) \textit{algebra of matrices} as an algebraic structure related to both linear and abstract algebra. We will briefly review definitions for homomorphisms of algebras as motivation for the definition of simple and semi-simple algebras. Our main topic in this paper is an exploration of the dimensions of semi-simple algebras. After introducing and defining the necessary algebraic objects, our proof will turn out to be rather elementary: most of our time is spent on careful manipulation of inequalities and solving quadratic equations.

We have attempted to write this paper in a way which is accessible to anyone with some knowledge of linear algebra (where we assume knowledge of abstract vector spaces, and matrix operations in arbitrary dimension $n$) and abstract algebra (where we assume exposure to the definition of a ring and to the concept of a homomorphism). Further knowledge of groups, rings and fields would be helpful but is not essential. Everything described in Section \ref{Algebra} is well-known, we make no claim of originality in our coverage of this material. For a more detailed discussion of the structure theory of rings and algebras, we refer the reader to Chapter 5 of \textit{Groups and Representations} by Alperin and Bell \cite{AlperinBell}; or to Chapter 13 of \textit{Algebra: A graduate course} by Isaacs \cite{IsaacsAlgebra}.

\section{Algebraic preliminaries}\label{Algebra}

Throughout this paper, we will use the symbol $k$ for an arbitrary (commutative) field. The reader less comfortable with field theory will lose nothing by taking $k = \mathbb{R}$ or $k= \mathbb{C}$ throughout. In the study of linear algebra, the linear transformations of a vector space are of central importance. After fixing a basis for an $n$-dimensional vector space over $k$, we can identify linear transformations with \textit{matrices}. We denote by $\mathrm{M}_{n}(k)$ the set of all $n \times n$ matrices with entries in $k$. There are three natural operations on matrices over a field:
\begin{enumerate}
    \item Scalar multiplication: for any scalar $c \in k$ and any matrix $M \in \mathrm{M}_{n}(k)$, the matrix $cM$ is obtained by multiplying each element of $M$ by $c$.
    \item Matrix addition: for matrices $M_{1}, M_{2} \in \mathrm{M}_{n}(k)$, addition is defined entry-wise.
    \item Matrix multiplication has a slightly more complicated definition motivated by \textit{composition} of linear maps, which we do not record formally here.
\end{enumerate}
It is a routine exercise in a linear algebra course to verify that
$\textrm{M}_{n}(k)$ forms a \textit{vector space} under the operations of
scalar multiplication and matrix addition. In a course on ring theory, one
proves that $\textrm{M}_{n}(k)$ under matrix addition and matrix multiplication forms a \textit{ring}. This is essentially the definition of a $k$-algebra: it is both a ring and a vector space. There is also some compatibility condition: the zero element of the ring structure and the vector space structure must coincide, for example. The formal definition follows.

\begin{definition}
An (associative\footnote{There are also non-associative algebras, in which only the second and third conditions hold. Often, some alternative property is imposed: Lie algebras, in which the Jacobi identity is enforced, have a structure theory analogous to what we discuss in this section.}) \textit{algebra} over the field $k$ is a vector space $\mathcal{A}$ over $k$ together with a binary operation (multiplication) which satisfies the following conditions:
\begin{enumerate}
    \item Associativity: $A(BC) = (AB)C$ for all $A, B, C \in \mathcal{A}$.
    \item Distributivity: $A(B+C) = AB + AC$ and $(A+B)C = AC + BC$ for all $A, B, C \in \mathcal{A}$.
    \item Scalar compatibility: $(cA)(dB) = cdAB$ for all $c, d \in k$ and all $A, B \in \mathcal{A}$.
\end{enumerate}
\end{definition}
Any subset of $\mathrm{M}_{n}(k)$ closed under all three operations is a \textit{subalgebra}. The diagonal matrices and the upper triangular matrices in $\mathrm{M}_{n}(k)$ are both subalgebras of $\mathrm{M}_{n}(k)$, for example. We can also construct subalgebras as \textit{direct sums}: suppose that $\mathcal{A}$ is a subalgebra of $\mathrm{M}_{t}(k)$ and $\mathcal{B}$ is a subalgebra of $\mathrm{M}_{n-t}(k)$. Then
\[ \mathcal{A} \oplus \mathcal{B} = \left\{  \left(\begin{array}{ll} A & \textbf{0} \\ \textbf{0} & B  \end{array}\right) \mid A \in \mathcal{A}, B \in \mathcal{B} \right\} \]
is a subalgebra of $\mathrm{M}_{n}(k)$. To see this, observe that block-matrices can be added and multiplied just like ordinary matrices (though one must take some care since multiplication of elements is no longer commutative). In this section, we focus exclusively on subalgebras of $\mathrm{M}_{n}(k)$. This is justified by an analogue of Cayley's Theorem which states that every $n$-dimensional associative algebra over the field $k$ is isomorphic to a subalgebra of $\mathrm{M}_{n}(k)$, see for example \cite[Theorem 12.2]{Isaacs} (though note that Isaacs proves a more general result for rings).

The study of algebras over a field is a classical topic in abstract algebra. As in group theory, the organising principle is \textit{homomorphism}.

\begin{definition}
Let $\mathcal{A}$ and $\mathcal{B}$ be algebras over a field $k$. A function $\mathcal{A} \rightarrow \mathcal{B}$ is a \textit{homomorphism} (of $k$-algebras) if it preserves all the operations of the algebra. More precisely,
\begin{enumerate}
    \item $\phi(cA) = c \phi(A)$ for all $c \in k$ and $A \in \mathcal{A}$
    \item $\phi(A + B) = \phi(A) + \phi(B)$ for all $A, B \in \mathcal{A}$
    \item $\phi(AB) = \phi(A)\phi(B)$ for all $A, B \in \mathcal{A}$
\end{enumerate}
\end{definition}
The \textit{kernel} of a homomorphism $\phi: \mathcal{A} \rightarrow \mathcal{B}$ is
\[ \textrm{ker}(\phi) = \left\{ A \in \mathcal{A} \mid \phi(A) =
\textbf{0}_{\mathcal{B}}\right\}\,. \]
It can be verified that $\textrm{ker}(\phi)$ is closed under all three operations of $\mathcal{A}$ and also satisfies the stronger condition that $AN, NA \in \textrm{ker}(\phi)$ for any $A \in\mathcal{A}$ and $N \in \textrm{ker}(\phi)$. So $\textrm{ker}(\phi)$ is both a \textit{subspace} of $\mathcal{A}$ (because $\phi$ is a linear transformation) and a two-sided \textit{ideal} of $\mathcal{A}$ (because $\phi$ is a homomorphism of rings). The next example shows how to construct homomorphisms of algebras from fixed subspaces of the underlying vector space.

\begin{example}
Suppose that $V$ is an $n$-dimensional vector space with basis $\{e_{1},
e_{2}, \ldots, e_{n}\}$ and that $\mathcal{A}$ is a subalgebra of $\mathrm{M}_{n}(k)$ in which every element fixes the subspace $U$ generated by $\{e_{n-t+1}, e_{n-t+2}, \ldots, e_{n}\}$ of dimension $t$ setwise. With respect to this basis, every element of $\mathcal{A}$ can be written in block-upper-triangular form:
\[ A =  \left(\begin{array}{ll} A_{V/U} & A_{X} \\ \textbf{0} & A_{U}  \end{array}\right) \]
The function $\phi: A \rightarrow A_{U}$ is a homomorphism of algebras. The
image of $\phi$ is $\{ A_{U} \mid A \in \mathcal{A}\}$ which is a subalgebra
of $\mathrm{M}_{t}(k)$, while the kernel consists of all matrices for which $A_{U} = 0$. Unless $A_{X}$ is identically zero for all $A \in \mathcal{A}$, the algebra $\mathcal{A}$ is \textit{not} a direct sum.
\end{example}
The reader unfamiliar with homomorphisms of algebras is advised to construct fixed subspaces of the diagonal and upper-triangular matrices, and hence to describe homomorphisms from these algebras to smaller matrix algebras. In fact, there is a First Isomorphism Theorem for $k$-algebras (analogous to the result in group theory). We record it below.

\begin{theorem}\label{FIT}
Suppose that $\mathcal{I}$ is a two-sided ideal in the $k$-algebra $\mathcal{A}$. For any $A \in \mathcal{A}$ define the coset of $\mathcal{I}$ containing $A$ by
\[ A + \mathcal{I} = \{ A + N \mid N \in \mathcal{I} \}\,.\]
Scalar multiplication, vector addition and vector multiplication on
$\mathcal{A}/\mathcal{I} = \{ A + \mathcal{I} \mid A \in \mathcal{A}\}$ are
defined in the natural way\footnote{That is: $c(A + \mathcal{I}) = cA + \mathcal{I}$ for all $c \in k$ and $A \in \mathcal{A}$. Addition and multiplication are given by $(A + \mathcal{I}) + (B + \mathcal{I}) = (A+B) + \mathcal{I}$ and $(A+\mathcal{I})(B + \mathcal{I}) = AB+\mathcal{I}$ respectively.}. With these operations, $\mathcal{A}/\mathcal{I}$ is a $k$-algebra and the function $\phi_{\mathcal{I}}: A \rightarrow A + \mathcal{I}$ is a homomorphism with kernel $\mathcal{I}$.
\end{theorem}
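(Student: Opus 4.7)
The plan is to follow the standard quotient construction from ring/group theory, adapted so that we simultaneously check all three algebraic operations. The proof naturally splits into three stages: (i) verify that the operations on $\mathcal{A}/\mathcal{I}$ defined in the footnote are well-defined on cosets; (ii) verify the algebra axioms; (iii) check that $\phi_{\mathcal{I}}$ is a homomorphism and compute its kernel. Stages (ii) and (iii) are essentially bookkeeping once stage (i) is complete, so the bulk of the work — and the place where the hypothesis on $\mathcal{I}$ really bites — lies in stage (i).

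For well-definedness, I would take $A + \mathcal{I} = A' + \mathcal{I}$ and $B + \mathcal{I} = B' + \mathcal{I}$, which means $A' = A + N_1$ and $B' = B + N_2$ for some $N_1, N_2 \in \mathcal{I}$. For scalar multiplication, $cA' = cA + cN_1$, and $cN_1 \in \mathcal{I}$ because $\mathcal{I}$ is closed under scalar multiplication (it is a subspace). For addition, $A' + B' = (A + B) + (N_1 + N_2)$, and $N_1 + N_2 \in \mathcal{I}$ because $\mathcal{I}$ is closed under addition. The crucial case is multiplication: expanding,
\[ A'B' = (A + N_1)(B + N_2) = AB + N_1 B + A N_2 + N_1 N_2. \]
Here $N_1 B \in \mathcal{I}$ because $\mathcal{I}$ absorbs left multiplication by elements of $\mathcal{A}$, $A N_2 \in \mathcal{I}$ because $\mathcal{I}$ absorbs right multiplication, and $N_1 N_2 \in \mathcal{I}$ because $\mathcal{I}$ is itself closed under multiplication. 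Hence $A'B' - AB \in \mathcal{I}$, so $A'B' + \mathcal{I} = AB + \mathcal{I}$. This is the only step that requires the two-sided ideal hypothesis, and it will be the main (and really the only) obstacle.

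For the algebra axioms on $\mathcal{A}/\mathcal{I}$, I would simply transport each identity across the quotient map. For instance, associativity reads
\[ (A+\mathcal{I})\bigl((B+\mathcal{I})(C+\mathcal{I})\bigr) = A(BC) + \mathcal{I} = (AB)C + \mathcal{I} = \bigl((A+\mathcal{I})(B+\mathcal{I})\bigr)(C+\mathcal{I}), \]
and the distributive and scalar compatibility axioms follow in the same mechanical way from the definition of the operations on cosets and the corresponding axioms in $\mathcal{A}$.

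Finally, the definitions of the coset operations force $\phi_{\mathcal{I}}(cA) = cA + \mathcal{I} = c(A+\mathcal{I}) = c\phi_{\mathcal{I}}(A)$, and similarly for addition and multiplication, so $\phi_{\mathcal{I}}$ is a $k$-algebra homomorphism. For the kernel, note that the zero element of $\mathcal{A}/\mathcal{I}$ is the coset $\mathbf{0} + \mathcal{I} = \mathcal{I}$, so $A \in \ker(\phi_{\mathcal{I}})$ iff $A + \mathcal{I} = \mathcal{I}$ iff $A \in \mathcal{I}$. This gives $\ker(\phi_{\mathcal{I}}) = \mathcal{I}$, completing the proof.
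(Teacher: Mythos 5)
Your proof is correct: the paper records this First Isomorphism Theorem without proof, and your argument is exactly the standard quotient construction one would supply, with the two-sided ideal hypothesis correctly isolated as the step needed for well-definedness of multiplication. The only point worth making explicit is that $\mathcal{I}$ must be closed under scalar multiplication; this is part of what ``ideal in a $k$-algebra'' means (and is how the paper uses the term when it notes that a kernel is both a subspace and a two-sided ideal), or it follows from absorption when $\mathcal{A}$ is unital, so your appeal to it is fine but deserves a word of justification.
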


Using Theorem \ref{FIT}, we can construct examples of homomorphisms not coming from fixed subspaces. One example comes from the set of \textit{strictly upper triangular} (s.u.t.) matrices, which are zero on and below the diagonal. Observe that the s.u.t. matrices form an algebra, and that this algebra is \textit{nilpotent}: there exists an integer $r$ such that all products of length $r$ in the algebra evaluate to $\textbf{0}$. It can be proved that the s.u.t. matrices form a two-sided ideal inside the upper triangular matrices. The quotient of $n\times n$ upper triangular matrices by s.u.t. matrices is isomorphic to the algebra of $n \times n$ diagonal matrices.

Of special interest in an algebraic theory are \textit{simple} objects: in this case $k$-algebras containing no non-trivial two-sided ideals. In contrast to group theory (where the classification of finite simple groups is one of the great achievements of mathematics) the classification of simple finite-dimensional $k$-algebras is relatively straightforward. We will not give a proof, but we will record the main result. Recall that a field $k$ is algebraically closed if every polynomial with coefficients in $k$ has a root in $k$.

\begin{theorem}[Wedderburn, Theorem 13.17 \cite{AlperinBell}]\label{Wed}
If the field $k$ is algebraically closed, then every simple finite-dimensional $k$-algebra is isomorphic to $\mathrm{M}_{n}(k)$ for some $n \in \mathbb{N}$.
\end{theorem}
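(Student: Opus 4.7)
The plan is to identify $\mathcal{A}$ with its action on a minimal left ideal. First I would use finite-dimensionality to choose a left ideal $L \subseteq \mathcal{A}$ of smallest positive dimension. Viewed as a left $\mathcal{A}$-module, $L$ is simple: any proper submodule is a proper left ideal of $\mathcal{A}$ contained in $L$, contradicting minimality. The strategy is then to show that the left multiplication action $\mathcal{A} \to \mathrm{End}_{k}(L)$ is an isomorphism onto the full endomorphism algebra, which will be isomorphic to $\mathrm{M}_{n}(k)$ where $n = \dim_{k}L$.

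Next I would invoke Schur's lemma: the ring $D = \mathrm{End}_{\mathcal{A}}(L)$ of $\mathcal{A}$-linear endomorphisms of $L$ is a division algebra, because the kernel and image of any nonzero $\mathcal{A}$-linear map $L \to L$ are $\mathcal{A}$-submodules of $L$, hence $0$ or all of $L$. Here is where algebraic closure of $k$ enters: for any $\alpha \in D$, viewed as a $k$-linear endomorphism of the finite-dimensional space $L$, there is an eigenvalue $\lambda \in k$, and then $\alpha - \lambda \cdot \mathrm{id}_{L}$ is a non-invertible element of the division algebra $D$, forcing $\alpha = \lambda \cdot \mathrm{id}_{L}$. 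Thus $D = k$.

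The third step is a decomposition of $\mathcal{A}$ into copies of $L$. Let $J$ be the sum (inside $\mathcal{A}$) of all left ideals of $\mathcal{A}$ that are isomorphic to $L$ as $\mathcal{A}$-modules. For any $a \in \mathcal{A}$ and any such left ideal $L'$, right multiplication by $a$ sends $L'$ onto $L'a$, which is either zero or (by simplicity of $L'$) $\mathcal{A}$-isomorphic to $L'$, so $L'a \subseteq J$. Hence $J$ is a two-sided ideal of $\mathcal{A}$ containing the nonzero ideal $L$, and simplicity forces $J = \mathcal{A}$. Extracting a maximal linearly independent family from this sum and using finite-dimensionality, I can write $\mathcal{A} \cong L^{n}$ as left $\mathcal{A}$-modules for some $n \geq 1$. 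Now a standard computation identifies the ring of right multiplications on $\mathcal{A}$:
\[ \mathcal{A}^{\mathrm{op}} \;\cong\; \mathrm{End}_{\mathcal{A}}(\mathcal{A}) \;\cong\; \mathrm{End}_{\mathcal{A}}(L^{n}) \;\cong\; \mathrm{M}_{n}\!\left(\mathrm{End}_{\mathcal{A}}(L)\right) \;=\; \mathrm{M}_{n}(k)\,. \]
Composing with the transpose map, which is an anti-isomorphism of $\mathrm{M}_{n}(k)$, yields $\mathcal{A} \cong \mathrm{M}_{n}(k)$ as required.

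The main obstacle is the decomposition step: one needs to convert the statement \emph{$\mathcal{A}$ has no nontrivial two-sided ideal} into the much stronger statement \emph{$\mathcal{A}$ is a direct sum of isomorphic simple left $\mathcal{A}$-modules}. The two-sided ideal trick for $J$ is elegant but subtle, and it is the step that genuinely uses simplicity (rather than just semi-simplicity). By contrast, Schur's lemma is purely formal and the collapse $D = k$ is a short linear-algebra argument using algebraic closure. I would also remark that dropping algebraic closure changes the conclusion to $\mathcal{A} \cong \mathrm{M}_{n}(D)$ for some finite-dimensional division algebra $D$ over $k$, which is the full Artin--Wedderburn statement.
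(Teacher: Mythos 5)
The paper does not prove this theorem at all: it is quoted as a black box from Alperin and Bell (Theorem 13.17), with the explicit remark ``We will not give a proof.'' So the relevant comparison is with the standard textbook argument, and your proposal is essentially that argument, correctly executed: minimal left ideal $L$, Schur's lemma, collapse of $D=\mathrm{End}_{\mathcal{A}}(L)$ to $k$ via an eigenvalue argument (this is exactly where algebraic closure enters), the two-sided ideal generated by the copies of $L$ to get $\mathcal{A}\cong L^{n}$, and finally $\mathcal{A}^{\mathrm{op}}\cong\mathrm{End}_{\mathcal{A}}(\mathcal{A})\cong\mathrm{M}_{n}(\mathrm{End}_{\mathcal{A}}(L))=\mathrm{M}_{n}(k)$, fixed up by the transpose anti-isomorphism. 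Two small points deserve attention. First, your argument tacitly uses a unit: $\mathrm{End}_{\mathcal{A}}(\mathcal{A})\cong\mathcal{A}^{\mathrm{op}}$ via $\phi\mapsto\phi(1)$, and the fact that $\mathcal{A}$-linear maps are $k$-linear, both use $1\in\mathcal{A}$. Under the paper's bare definition of ``simple'' (no nontrivial two-sided ideals, no unit assumed), the one-dimensional algebra with zero multiplication is a counterexample to the literal statement, so you should either assume $\mathcal{A}$ unital (as the cited source does) or add the hypothesis $\mathcal{A}^{2}\neq 0$ and note that this forces a unit in the finite-dimensional simple case. Second, your announced strategy (showing left multiplication $\mathcal{A}\to\mathrm{End}_{k}(L)$ is onto, i.e.\ the density-theorem route) is not the route you actually take; the executed proof via $\mathcal{A}^{\mathrm{op}}$ is fine, but the opening sentence should be adjusted to match it. With those adjustments the proof is complete, and your closing remark correctly identifies the general Artin--Wedderburn statement $\mathcal{A}\cong\mathrm{M}_{n}(D)$ when $k$ is not algebraically closed, which is precisely the extra generality the paper alludes to after the theorem.
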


Requiring $k$ to be algebraically closed means that we can always construct eigenvalues and eigenvectors of matrices. Without this assumption, there are other classes of simple algebras over $k$, which are fully described in the reference. Those additional algebras will not concern us in this paper (for any field $k$, a semi-simple sub-algebra of $\mathrm{M}_{n}(k)$ has dimension equal to one of the examples given in Theorem \ref{Wed}). A famous theorem of Jacobson gives a decomposition of an arbitrary algebra into nilpotent and simple components, greatly generalising our example of upper triangular and s.u.t. matrices above.

\begin{theorem}[Jacobson, Theorem 13.23 \cite{AlperinBell}]\label{Jac}
Let $\mathcal{A}$ be a finite-dimensional $k$-algebra. Then there exists a largest nilpotent ideal which contains all other nilpotent ideals, called the \textit{Jacobson radical}.
The quotient of $\mathcal{A}$ by the Jacobson radical is isomorphic to a direct sum of simple algebras.
\end{theorem}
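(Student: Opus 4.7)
The plan is to prove the two assertions in turn: first construct the Jacobson radical, then analyse the quotient.

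For the existence of a largest nilpotent ideal, the central lemma is that the sum of two nilpotent two-sided ideals is again nilpotent. If $I^r = \mathbf{0}$ and $K^s = \mathbf{0}$, expand $(I+K)^{r+s-1}$ as a sum of monomials of length $r+s-1$ in elements of $I \cup K$. Using that $I$ is a two-sided ideal, any $K$-factors adjacent to an $I$-factor can be absorbed into the latter, so each monomial lies in $I^a$ where $a$ is the number of its $I$-factors; a symmetric statement holds for $K$. Since $a + b = r+s-1$, either $a \geq r$ or $b \geq s$, and in either case the monomial vanishes. Granted this, the finite-dimensionality of $\mathcal{A}$ lets us pick a nilpotent two-sided ideal $J$ of maximal $k$-dimension; for any other nilpotent ideal $I$, the sum $I+J$ is nilpotent by the lemma and contains $J$, so maximality forces $I \subseteq J$, exhibiting $J$ as the Jacobson radical.

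I would then observe that $\mathcal{B} := \mathcal{A}/J$ has no nonzero nilpotent two-sided ideals: if $\bar{I}$ were one with $\bar{I}^r = \mathbf{0}$, its preimage $I \supseteq J$ in $\mathcal{A}$ would satisfy $I^r \subseteq J$ and hence $I^{rs} = \mathbf{0}$ for the $s$ with $J^s = \mathbf{0}$, forcing $I \subseteq J$ by maximality of $J$, so $\bar{I} = \mathbf{0}$. The theorem therefore reduces to the assertion that a finite-dimensional $k$-algebra $\mathcal{B}$ with no nonzero nilpotent two-sided ideals is a direct sum of simple subalgebras.

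For this reduction I would induct on $\dim_k \mathcal{B}$. Pick a minimal two-sided ideal $\mathcal{I}_1 \subseteq \mathcal{B}$; since $\mathcal{I}_1$ is not nilpotent, minimality gives $\mathcal{I}_1^2 = \mathcal{I}_1$. The key technical step is to produce a nonzero central idempotent $e \in \mathcal{I}_1$ with $e\mathcal{B} = \mathcal{B}e = \mathcal{I}_1$. One does this by choosing $x \in \mathcal{I}_1$ with $\mathcal{I}_1 x \neq \mathbf{0}$, using minimality to obtain $ex = x$ for some $e \in \mathcal{I}_1$, refining $e$ so that $e^2 = e$, and then invoking a separate short argument (again using the semiprime hypothesis) to upgrade $e$ to a central idempotent. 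Once this is in place, $\mathcal{B} = e\mathcal{B} \oplus (1-e)\mathcal{B}$ splits as a direct sum of two-sided ideals; the first summand is $\mathcal{I}_1$ and is simple as an algebra because every ideal of $e\mathcal{B}$ is already an ideal of $\mathcal{B}$, while the second summand has strictly smaller dimension and inherits the absence of nilpotent ideals, so the induction hypothesis completes the decomposition.

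The main obstacle throughout is the production of this central idempotent inside the minimal two-sided ideal $\mathcal{I}_1$: this is essentially the Wedderburn--Artin theorem, and it is the only step where the hypothesis that $\mathcal{B}$ has no nilpotent ideals is used in an essential (rather than purely bookkeeping) way. The remaining work --- the nilpotence-sum lemma, the maximality argument, the reduction to $\mathcal{A}/J$, and the inductive direct-sum decomposition --- is routine ideal arithmetic and dimension counting.
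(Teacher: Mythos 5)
The paper never proves this statement: it is quoted as background from Alperin and Bell (their Theorem 13.23), so there is no in-paper argument to compare yours against and I can only judge the sketch on its own terms. Your first half is correct and essentially complete: the expansion of $(I+K)^{r+s-1}$ showing that a sum of nilpotent two-sided ideals is nilpotent, the choice of a nilpotent ideal of maximal $k$-dimension, and the lifting argument showing that $\mathcal{A}/J$ has no nonzero nilpotent two-sided ideals are all sound and carried out in adequate detail.

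The second half, however, has a genuine gap, and it sits exactly where the theorem's content lies. The step you defer (``a separate short argument \dots to upgrade $e$ to a central idempotent'') is not a side lemma; producing a central idempotent generating a minimal two-sided ideal of a semiprime finite-dimensional algebra is essentially the whole semisimple structure theory being cited. Worse, the one concrete step you do give is flawed as written: you pick $x \in \mathcal{I}_1$ with $\mathcal{I}_1 x \neq \mathbf{0}$ and ``use minimality to obtain $ex = x$'', but $\mathcal{I}_1 x$ is only a \emph{left} ideal of $\mathcal{B}$, while $\mathcal{I}_1$ is minimal among \emph{two-sided} ideals, so minimality does not force $\mathcal{I}_1 x = \mathcal{I}_1$ and the idempotent does not appear. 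That argument (Brauer's lemma) is valid for a minimal left ideal with nonzero square; to make your route work you must either first show $\mathcal{B}$ is a sum of minimal left ideals (semisimplicity of $\mathcal{B}$ as a module over itself) and then assemble isotypic components into two-sided ideals, or prove directly that a minimal two-sided ideal of a semiprime Artinian algebra is a simple ring with an identity element, which then serves as the central idempotent. A smaller point: the paper's algebras need not contain an identity (they are subalgebras of $\mathrm{M}_n(k)$ that may omit $I_n$), so the splitting $\mathcal{B} = e\mathcal{B} \oplus (1-e)\mathcal{B}$ should be phrased as $\mathcal{B} = e\mathcal{B} \oplus \{\, b - eb \mid b \in \mathcal{B} \,\}$, using centrality of $e$ to see both summands are two-sided ideals. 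As it stands, your proposal proves the ``largest nilpotent ideal'' half of the theorem but only gestures at the direct-sum decomposition half.
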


Jacobson's theorem motivates the following definition.

\begin{definition}
An algebra is \textit{semi-simple} if its Jacobson radical is $(\textbf{0})$.
\end{definition}

Over an algebraically closed field, a semi-simple algebra is isomorphic to a direct sum of matrix algebras. A theorem of Malcev\footnote{This result is also called the Wedderburn Principal Theorem and the Wedderburn-Malcev theorem in the literature. In the context of Lie algebras, it is called the Jordan-Chevalley decomposition.} gives precise information about how a semi-simple algebra sits inside of $\mathrm{M}_{n}(k)$. We describe the general result: suppose that $k$ is algebraically closed and $\mathcal{A}$ is a subalgebra of $\mathrm{M}_{n}(k)$. Let $\mathcal{J}$ be the Jacobson radical of $\mathcal{A}$. Then there exists a subalgebra $\mathcal{S}$ of $\mathcal{A}$ which is isomorphic to the semi-simple quotient $\mathcal{A}/\mathcal{J}$, and disjoint from $\mathcal{J}$. Any other subalgebra which is a complement to $\mathcal{J}$ (as a vector space) is necessarily conjugate to $\mathcal{S}$. Up to conjugation in $\mathrm{M}_{n}(k)$, the algebra $\mathcal{S}$ is block-diagonal and $\mathcal{J}$ is s.u.t. and disjoint from $\mathcal{S}$. We record a special case of the general theory, for which we will have use in the next section.

\begin{theorem}[Malcev, \cite{Malcev}]\label{Malcev}
Let $k$ be an algebraically closed field, and let $\mathcal{A}$ be a semi-simple subalgebra of $\mathrm{M}_{n}(k)$.
If $\mathcal{A}$ is isomorphic to the direct sum $\bigoplus_{i=1}^{t}
\textrm{M}_{n_{i}}(k)$ then $\mathcal{A}$ is conjugate in
$\mathrm{M}_{n}(k)$ to an algebra of block diagonal matrices where the $i^{\textrm{th}}$ block has size $n_{i} \times n_{i}$.
\end{theorem}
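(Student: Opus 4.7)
My approach would be to regard $V = k^n$ as a left $\mathcal{A}$-module via matrix multiplication on column vectors; the required conjugation is then simply a change of basis of $V$. Specifically, if I can produce an $\mathcal{A}$-stable direct-sum decomposition $V = V_1 \oplus \cdots \oplus V_t$ with $\dim V_i = n_i$, such that the $i$-th summand $\mathrm{M}_{n_i}(k) \subseteq \mathcal{A}$ acts faithfully on $V_i$ while the other summands annihilate it, then writing matrices in a basis of $V$ obtained by concatenating bases of the $V_i$ will exhibit every element of $\mathcal{A}$ in block-diagonal form with blocks of sizes $n_1, \ldots, n_t$; the associated change-of-basis matrix in $\mathrm{GL}_n(k)$ is the conjugating element.

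The construction of this decomposition proceeds in two stages. First, semi-simplicity of $\mathcal{A}$ (the Jacobson radical being zero) yields, via Theorem~\ref{Jac} together with the standard companion fact that every finite-dimensional module over a semi-simple $k$-algebra is itself a direct sum of simple submodules, a decomposition of $V$ into simple $\mathcal{A}$-submodules. Second, I would classify the simple $\mathcal{A}$-modules. The $t$ identities $e_i$ of the summands $\mathrm{M}_{n_i}(k)$ form orthogonal central idempotents in $\mathcal{A}$ summing to $1$; on any simple $\mathcal{A}$-module each $e_i$ acts as a central element, hence (by Schur's lemma over the algebraically closed field $k$) as a scalar, and since $e_i^2 = e_i$ this scalar is $0$ or $1$, while orthogonality forces exactly one $e_i$ to equal $1$. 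Every simple $\mathcal{A}$-module is therefore a simple module for a single summand $\mathrm{M}_{n_i}(k)$, and Theorem~\ref{Wed} together with the uniqueness of the simple $\mathrm{M}_{n_i}(k)$-module (its natural representation $V_i = k^{n_i}$) yields that the simple $\mathcal{A}$-modules are exactly $V_1, \ldots, V_t$. Writing $V \cong \bigoplus_{i=1}^{t} V_i^{\oplus m_i}$ and comparing dimensions gives $n = \sum_i m_i n_i$; since the sought conclusion requires $\sum_i n_i = n$, each $m_i$ must equal $1$, producing the decomposition I needed.

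The only substantive obstacle is the uniqueness of the simple $\mathrm{M}_m(k)$-module, which I would establish directly: for any simple $\mathrm{M}_m(k)$-module $W$ and any nonzero $w \in W$, the map $A \mapsto Aw$ is a surjective $\mathrm{M}_m(k)$-linear map $\mathrm{M}_m(k) \to W$ whose kernel is a maximal left ideal; the maximal left ideals of $\mathrm{M}_m(k)$ are precisely the stabilizers of one-dimensional subspaces of $k^m$, all $\mathrm{GL}_m(k)$-conjugate to the annihilator of $e_1$, so $W$ is isomorphic to the natural module $k^m$. Once this is in hand, the remaining steps—Schur's lemma applied to the central idempotents, the decomposition of $V$ into simples, the multiplicity count, and the final change of basis—are routine linear algebra.
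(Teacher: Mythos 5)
The paper does not prove Theorem \ref{Malcev} at all---it is quoted from the literature---so there is no in-paper argument to compare yours against; it has to stand on its own. Your overall strategy (regard $k^{n}$ as an $\mathcal{A}$-module, split it into simples, identify the simples via the central idempotents $e_{i}$ and the uniqueness of the simple $\mathrm{M}_{n_i}(k)$-module, then change basis) is the standard route to such a statement, and those ingredients are essentially in order, up to a terminological slip: the maximal left ideals of $\mathrm{M}_{m}(k)$ are the \emph{annihilators}, not stabilizers, of one-dimensional subspaces of $k^{m}$ (your follow-up phrase ``the annihilator of $e_{1}$'' shows you meant this).

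The genuine gap is the multiplicity step. You write that ``since the sought conclusion requires $\sum_{i} n_{i}=n$, each $m_{i}$ must equal $1$,'' but $\sum_{i} n_{i}=n$ is not a hypothesis of Theorem \ref{Malcev} (that equality is the definition of a \emph{connected} semi-simple algebra, introduced only afterwards), and one cannot infer the values of the $m_{i}$ from what one would like to conclude. Indeed, the paper's own example immediately following the theorem, the diagonal copy $\left\{ \mathrm{diag}(A,A) : A \in \mathrm{M}_{2}(k)\right\}\cong \mathrm{M}_{2}(k)$ inside $\mathrm{M}_{4}(k)$, has $t=1$, $n_{1}=2$ and multiplicity $m_{1}=2$, so ``each $m_{i}=1$'' is false in the generality in which the theorem is stated. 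Two further points are swept into the same step: (i) to know that every $V_{i}$ occurs at all ($m_{i}\geq 1$) you need the faithfulness of the inclusion $\mathcal{A}\subseteq \mathrm{M}_{n}(k)$---each $e_{i}$ is a nonzero matrix, hence cannot act as zero on all of $V$; (ii) the paper's subalgebras are not required to contain $I_{n}$, so $1_{\mathcal{A}}=\sum_{i}e_{i}$ may be a proper idempotent of $\mathrm{M}_{n}(k)$, $V$ is then not a unital $\mathcal{A}$-module, and the ``standard companion fact'' about modules over semi-simple algebras applies only after splitting $V=1_{\mathcal{A}}V\oplus (I_{n}-1_{\mathcal{A}})V$ and noting that $\mathcal{A}$ annihilates the second summand (the paper's second example, $\left\{\mathrm{diag}(A,0)\right\}$, is exactly this situation). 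The clean repair is to prove what is actually true at this level of generality: $V\cong V_{1}^{\oplus m_{1}}\oplus\cdots\oplus V_{t}^{\oplus m_{t}}\oplus V_{0}$ with $m_{i}\geq 1$ and $\mathcal{A}V_{0}=0$, so that $\mathcal{A}$ is conjugate to a block-diagonal algebra with $m_{i}$ blocks of size $n_{i}$ for each $i$ together with a zero block; in the connected case $\sum_{i}n_{i}=n$, the dimension count $\sum_{i}m_{i}n_{i}+\dim V_{0}=n$ then forces $m_{i}=1$ and $V_{0}=0$, which is the form of the theorem the paper actually uses in Lemma \ref{recursive}.
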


If $\sum_{i=1}^{t} n_{i} < n$ then Malcev's theorem does not, in general, tell us what happens in the remainder of the matrix. The following algebras are semi-simple and isomorphic but not conjugate, for example:
\[\left\{ \left( \begin{array}{ll} A & \textbf{0} \\ \textbf{0} & A \end{array}\right) \mid A \in \mathrm{M}_{2}(k) \right\}, \,\,\, \left\{ \left( \begin{array}{ll} A & \textbf{0} \\ \textbf{0} & \textbf{0} \end{array}\right) \mid A \in \mathrm{M}_{2}(k) \right\}\,. \]
In the next section we will explore the possible dimensions for a semi-simple
subalgebra of $\textrm{M}_{n}(k)$.

\section{Dimensions of semi-simple subalgebras of $\textrm{M}_{n}(k)$}

A \textit{connected semi-simple subalgebra} (CSA) of $\mathrm{M}_{n}(k)$ is a subalgebra which is semisimple, such that the sum of the orders of the simple components is precisely $n$. (That is, $\sum_{i} n_{i} = n$ in the notation of Theorem \ref{Malcev}.) We will write $\mathcal{C}(n)$ for the set of dimensions of CSAs of $\mathrm{M}_{n}(k)$.

It follows directly from Theorem \ref{Wed} and the definition of a CSA that an integer $\ell \in [0, 1, \ldots, n^{2}]$ is the dimension of a CSA in $\mathrm{M}_{n}(k)$ if and only if there exists a partition $d_{1} + d_{2} + \ldots + d_{t}$ of $n$ such that $\ell = d_{1}^{2} + d_{2}^{2} + \ldots + d_{t}^{2}$. So we have the following explicit description:
\[ \mathcal{C}(n) = \{ \ell = d_{1}^{2} + d_{2}^{2} + \ldots + d_{t}^{2} \mid d_{1} + d_{2} + \ldots + d_{t} = n \} \,.\]
Since for any integer $t^{2} \equiv t \mod 2$, it follows that
\[ \ell =  d_{1}^{2} + d_{2}^{2} + \ldots + d_{t}^{2} \equiv d_{1} + d_{2} + \ldots + d_{t} = n \mod 2\,.\]
We record this result as a Lemma.
\begin{lemma} \label{parity}
If $\ell$ is the dimension of a CSA of $\mathrm{M}_{n}(k)$ then $\ell \equiv n \mod 2$.
\end{lemma}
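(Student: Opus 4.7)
The plan is to reduce the lemma to the trivial parity observation that any integer has the same parity as its square, then sum over the parts of the partition.

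First I would invoke the explicit description of $\mathcal{C}(n)$ established just above the lemma statement: by Wedderburn's theorem and the definition of a CSA, if $\ell$ is the dimension of a CSA of $\mathrm{M}_n(k)$, then there is a partition $n = d_1 + d_2 + \cdots + d_t$ with $\ell = d_1^2 + d_2^2 + \cdots + d_t^2$. This is the only structural input needed; the rest is elementary number theory modulo $2$.

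Next I would establish the single-integer fact $d^2 \equiv d \pmod 2$ for every $d \in \mathbb{Z}$. The cleanest justification is to note that $d^2 - d = d(d-1)$ is a product of two consecutive integers, hence even; one could equally well argue by cases on whether $d$ is even or odd. Summing this congruence over $i = 1, 2, \ldots, t$ yields
\[ \ell = \sum_{i=1}^t d_i^2 \equiv \sum_{i=1}^t d_i = n \pmod{2}, \]
which is precisely the conclusion of the lemma.

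There is no real obstacle here: the statement is a two-line corollary of Theorem \ref{Wed} combined with the parity identity $d^2 \equiv d \pmod 2$. The only point worth flagging is that the representation $\ell = \sum d_i^2$ with $\sum d_i = n$ is genuinely forced on us by the CSA hypothesis (via Malcev's description in Theorem \ref{Malcev} and the definition of a connected semi-simple subalgebra), and not merely a convenient choice; without the connectedness assumption $\sum d_i = n$ we would lose control of the right-hand parity.
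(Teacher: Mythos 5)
Your proof is correct and matches the paper's argument exactly: the paper also derives the lemma from the description $\ell = d_1^2 + \cdots + d_t^2$ with $d_1 + \cdots + d_t = n$ together with the congruence $d^2 \equiv d \pmod 2$ summed over the parts. Your justification via $d(d-1)$ being even is a minor elaboration of a step the paper treats as obvious.
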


Lemma \ref{parity} already shows that $\lim_{n\rightarrow \infty} n^{-2}|\mathcal{C}(n)| \leq 1/2$.
Unfortunately, the number of partitions of $n$is proportional to $e^{c\sqrt{n}}$ (see Chapter 15 of van Lint and Wilson's \textit{A course in combinatorics} for an introduction to the theory of partitions \cite{vanLintWilson}). So while elegant, this description of $\mathcal{C}(n)$ cannot be used in computations for even moderate values of $n$. Instead, we describe $\mathcal{C}(n)$ recursively.

\begin{lemma} \label{recursive}
Set $\mathcal{C}(0) = \{0\}$. For each $n \geq 2$,
\[ \mathcal{C}(n) = \cup_{j=1}^{n} \left\{ j^{2} + \ell \mid \ell \in \mathcal{C}(n-j)\right\}\,.\]
\end{lemma}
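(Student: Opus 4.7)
The plan is to prove the identity by double inclusion, pulling everything back to the explicit partition-based description
\[ \mathcal{C}(n) = \{ d_{1}^{2} + d_{2}^{2} + \ldots + d_{t}^{2} \mid d_{1} + d_{2} + \ldots + d_{t} = n,\ d_{i} \geq 1 \} \]
which was established just before the statement. No further structure theory is needed: the content of the lemma is just that each partition of $n$ can be uniquely built by choosing one part $j$ and then partitioning $n-j$.

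For the inclusion $\subseteq$, I would take any $m \in \mathcal{C}(n)$ and write $m = d_{1}^{2} + \cdots + d_{t}^{2}$ with $d_{1} + \cdots + d_{t} = n$ and each $d_{i} \geq 1$. Set $j := d_{t}$; then $j \in \{1, \ldots, n\}$ (using $d_{t} \leq \sum d_{i} = n$). The remaining squares satisfy $m - j^{2} = d_{1}^{2} + \cdots + d_{t-1}^{2}$ with $d_{1} + \cdots + d_{t-1} = n - j$, so $m - j^{2} \in \mathcal{C}(n-j)$. In the boundary case $t = 1$ one has $j = n$, the remaining sum is empty, and $m - j^{2} = 0 \in \mathcal{C}(0)$ by the stated convention; this is the one place the convention is actually used.

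For the inclusion $\supseteq$, fix $j \in \{1, \ldots, n\}$ and $\ell \in \mathcal{C}(n-j)$. By definition there is a partition $e_{1} + \cdots + e_{s} = n - j$ (with $s = 0$ and the empty sum permitted when $n - j = 0$) such that $\ell = e_{1}^{2} + \cdots + e_{s}^{2}$. Prepending $j$ yields a partition $j + e_{1} + \cdots + e_{s} = n$ of $n$ into positive parts, whose sum of squares is $j^{2} + \ell$. Hence $j^{2} + \ell \in \mathcal{C}(n)$, completing the proof.

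The only thing that could reasonably be called an obstacle is the bookkeeping around the trivial partition $n = n$ and the empty partition of $0$; both are handled cleanly by the convention $\mathcal{C}(0) = \{0\}$, which corresponds on the algebra side to the CSA $\mathrm{M}_{n}(k)$ itself (contributing $n^{2} = n^{2} + 0$) and to the zero-dimensional algebra, respectively.
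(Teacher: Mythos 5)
Your proof is correct and takes essentially the same approach as the paper: the recursion is obtained by splitting off a single simple component (one part $j$ of the partition, equivalently one diagonal block $\mathrm{M}_{j}(k)$) and noting that what remains is governed by $\mathcal{C}(n-j)$. The only difference is presentational --- the paper phrases the peel-off at the level of block-diagonal algebras via Malcev's theorem, while you argue by double inclusion directly from the sum-of-squares description of $\mathcal{C}(n)$ and are more explicit about the boundary case $j=n$ and the convention $\mathcal{C}(0)=\{0\}$.
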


\begin{proof}
By Theorem \ref{Malcev}, we may assume that all CSAs are block-diagonal, and completely described by the corresponding partition of $n$.
Each CSA in $\mathrm{M}_{n}(k)$ has a full matrix algebra $\mathrm{M}_{j}(k)$ in its upper left corner, for some positive integer $j$.
In the lower right $(n-j) \times (n-j)$ block, there must be a semi-simple subalgebra of $\mathrm{M}_{n-j}(k)$, which has dimension in the set $\mathcal{C}(n-j)$.
This establishes the recursion. (Note that isomorphic subalgebras are counted multiple
times, and that there may be multiple non-isomorphic algebras with the same dimension.)
\end{proof}

This recursion is moderately efficient and allowed the authors to compute the sets $\mathcal{C}(n)$ for values of $n$
in the thousands without difficulty. By Lemma \ref{parity}, the dimension of a CSA is of the form $n + 2m$ for some integer $m$.

\begin{definition}
An even integer $2m$ is \textit{realisable} in dimension $n$ if there exists a CSA of
$\mathrm{M}_{n}(k)$ of dimension $n + 2m$. The \textit{width} of $2m$ is the minimal dimension in which it is realisable.
\end{definition}

If $2m$ is realisable in dimension $n$, then it is realisable in all larger dimensions. We adapt an argument of Savitt and Stanley \cite{SavittStanley} to give an upper bound on the width of $2m$. 
First observe that if $\sum_{i = 1}^d t_i^2 = n + 2m$ and $\sum_{i = 1}^d t_i
= n$ then $\sum_{i = 1}^d t_i\left(t_i - 1\right) = 2m$.
With this in mind we define a decomposition of an even integer $2m$.

\begin{definition}
For an even integer $2m$, define the \textit{greedy decomposition} as
\[ 2m = t_{0}(t_{0}-1) + t_{1}(t_{1}-1) + \ldots + t_{d}(t_{d}-1) \]
where all of the $t_{j}$ are positive integers and each $t_{j}$ is chosen to be maximal subject to the
condition $t_{j}(t_{j}-1) \leq n - \sum_{i=0}^{j-1} t_{i}(t_{i}-1)$.
The \textit{greedy width} of $2m$ is $\mathcal{G}(2m) = \sum_{j=1}^{d} t_{j}$.
\end{definition}

For example, the greedy decomposition of $40$ is
\[ 40 = 6(5) + 3(2) + 2(1) + 2(1)\]
and so $\mathcal{G}(40) = 13$. This implies that $\textrm{M}_{n}(k)$ contains a CSA of dimension $n+40$ for all $n \geq 13$. The greedy construction for an algebra of dimension $n+40$
is the direct sum
\[ \mathrm{M}_{6}(k) \oplus \mathrm{M}_{3}(k) \oplus \mathrm{M}_{2}(k) \oplus \mathrm{M}_{2}(k) \oplus \mathrm{M}_{1}(k) \times (n-13)\,, \]
where the last term indicates that one takes the direct sum of $n-13$ copies
of the one-dimensional matrix algebra. 
A little thought reveals that it is possible to do better: $\mathrm{M}_{10}(k)$
contains the sub-algebra $\mathrm{M}_{5}(k)\oplus \mathrm{M}_{5}(k)$ which has dimension $50 =
10 + 40$. Hence the actual width of $40$ is at most $10$.
Nevertheless, the greedy width is a useful upper bound on the minimal width of CSAs. Its behaviour is sufficiently regular that we can prove some theorems about it.

\begin{proposition} \label{greedy}
For every integer $m$, we have $\mathcal{G}(2m) \leq \mathrm{max} \left\{ \frac{3}{2}\sqrt{2m},\, 38\right\}$.
\end{proposition}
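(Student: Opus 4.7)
My plan is to proceed by strong induction on $m$, using the recursive structure $\mathcal{G}(2m) = t_0 + \mathcal{G}(r_1)$, where $r_1 = 2m - t_0(t_0-1)$ is the first residual of the greedy decomposition. The first step is to establish the following structural estimate: by the maximality of each $t_j$ we have $(t_j+1)t_j > r_j$, so $r_{j+1} = r_j - t_j(t_j-1) < 2t_j$, and since every residual is even this yields $r_{j+1} \leq 2t_j - 2$. In particular $r_1 \leq 2t_0-2$, and from $t_0(t_0-1) \leq 2m$ we obtain $t_0 \leq \tfrac{1}{2}+\sqrt{\tfrac{1}{4}+2m}$.

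For the inductive step, applying the inductive hypothesis to $r_1 < 2m$ yields
\[ \mathcal{G}(2m) \;\leq\; t_0 + \max\!\bigl\{\tfrac{3}{2}\sqrt{r_1},\;38\bigr\} \;\leq\; t_0 + \max\!\bigl\{\tfrac{3}{2}\sqrt{2t_0-2},\;38\bigr\}, \]
which I would compare against $\tfrac{3}{2}\sqrt{2m} \geq \tfrac{3}{2}\sqrt{t_0(t_0-1)}$ in two regimes. When $t_0 \geq 322$ the $\max$ equals $\tfrac{3}{2}\sqrt{2t_0-2}$, and the required inequality $t_0 + \tfrac{3}{2}\sqrt{2t_0-2} \leq \tfrac{3}{2}\sqrt{t_0(t_0-1)}$ reduces to a polynomial inequality in $t_0$ that is easy to verify for all $t_0$ above a small threshold. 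When $78 \leq t_0 \leq 321$ the $\max$ equals $38$, and the induction closes via the quadratic inequality $t_0 + 38 \leq \tfrac{3}{2}\sqrt{t_0(t_0-1)}$; direct algebra shows this holds for $t_0 \geq 78$ and is tight there (it fails at $t_0 = 77$, which pins down the base-case threshold).

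The base case therefore covers $t_0 \leq 77$, that is, $2m \leq 77 \cdot 78 - 1 = 6005$. Here I would iterate the residual inequality to control the tail directly: $t_0 \leq 77$ forces $r_1 \leq 152$, so $t_1 \leq 12$, $r_2 \leq 22$, $t_2 \leq 5$, and so on, with the decomposition terminating in a bounded number of steps. Either careful case analysis or a straightforward computational check confirms $\mathcal{G}(2m) \leq \max\{\tfrac{3}{2}\sqrt{2m},\,38\}$ throughout this finite range. The tightness is witnessed by $2m = 640$, whose greedy decomposition $25+6+3+2+2$ gives $\mathcal{G}(640) = 38$ exactly, and by $2m = 690$, where $\mathcal{G}(690) = 26+6+3+2+2 = 39$ lies just beneath $\tfrac{3}{2}\sqrt{690} \approx 39.40$.

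The main obstacle is precisely this tightness near the crossover of the two branches of the $\max$. In the intermediate range $26 \leq t_0 \leq 77$ the inductive hypothesis only provides $\mathcal{G}(r_1) \leq 38$ when $r_1 \leq 640$, an estimate that is not absorbed by $\tfrac{3}{2}\sqrt{2m}$; consequently some explicit base-case verification in the range $2m \leq 6005$ seems unavoidable. The example $2m = 640$ shows that the constant $38$ in the statement cannot be reduced.
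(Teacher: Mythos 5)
Your proposal is correct and follows essentially the same route as the paper: strong induction on the first greedy term $t_0$ with the residual bound $r_1 \leq 2t_0-2$, a case split according to which branch of the $\max$ the inductive hypothesis supplies, and an unavoidable finite (computational) base case up to roughly $2m \approx 6000$, matching the paper's verification up to $m = 3042$. Your thresholds are phrased in terms of $t_0$ (namely $78$ and $322$) rather than in terms of $m$ as in the paper, but this is only a cosmetic reorganization of the same argument, including the sharpness example $2m = 640$.
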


\begin{proof}
The claim may be verified computationally up to $m = 3042$. The last time that the inequality $\mathcal{G}(2m) \leq \frac{3}{2}\sqrt{2m}$ fails is when $2m = 640$.
In fact, the greedy decomposition of $640$ requires dimension $38$, with decomposition $25 + 6 + 3 + 2 + 2$, while $\frac{3}{2}\sqrt{2m} \sim 37.947$.

Assume that the claim holds for all even integers up to $2m-2$, where $m \geq 3042$. We will show by induction that the claim holds for $2m$.
The first term in the greedy expansion of $2m$ is the unique integer $t$ such that
\[ t(t-1) \leq 2m  < (t+1)t\,. \]
It follows that $2m - t(t-1) < (t+1)t - t(t-1) \leq 2t$. We apply the induction hypothesis:
\begin{eqnarray*}
\mathcal{G}(2m) & \leq & t + \mathcal{G}\left(2m- (t(t-1)\right) \\
 & \leq & t + \mathcal{G}\left(2t-2\right) \\
 & \leq & t + \textrm{max}\left(\frac{3}{2} \sqrt{2t-2},\, 38\right) \,.
\end{eqnarray*}

Since $t(t-1)\leq 2m$ by construction, we have $(t-1)^{2} \leq 2m$, equivalently $t-1 \leq \sqrt{2m}$.
We deal with each possibility separately: suppose first that $\mathcal{G}(2m) \leq t + 38$. Then:
\[ \mathcal{G}(2m) \leq t + 38 \leq \sqrt{2m} + 39 \leq \frac{3}{2} \sqrt{2m}\,,\]
which holds provided $m \geq 3042$. In the second case, again using $t -1 \leq \sqrt{2m}$,
\[ \mathcal{G}(2m) \leq t + \frac{3}{2}\sqrt{2t-2} \leq \sqrt{2m} + 1 + \frac{3}{2}\sqrt[4]{8m} \,.\]
We rearrange the inequality $1 + \sqrt{2m} + \frac{3}{2}\sqrt[4]{8m} \leq \frac{3}{2}\sqrt{2m}$ to get
$2 + 3\sqrt[4]{8m} \leq \sqrt{2m}$.
Multiplying both sides by $2$, we get
\[ 4 + 6 \sqrt[4]{8m} \leq \sqrt{8m} \,.\]
We make the substitution $y = \sqrt[4]{8m}$ and solve the resulting quadratic equation to find that the inequality holds when $y \geq 3 + \sqrt{13}$.
Solving in terms of $m$, we obtain $m\geq 119 + 33\sqrt{13}$ which implies that the result holds whenever $2m \geq 476$.
Since we already assumed that $m \geq 3042$, the result is established by induction.
\end{proof}

We apply Proposition \ref{greedy} to prove that for sufficiently large $n$, the dimensions of CSAs in the matrix algebra $\mathrm{M}_{n}(k)$ are structured and predictable, except for a (relatively) small region close to $n^{2}$.

\begin{theorem}\label{main}
For any $n \geq 225$, the algebra $\mathrm{M}_{n}(k)$ contains a connected semi-simple subalgebra of dimension $n + 2m$ for every even integer satisfying $0 \leq 2m \leq n^{2} - \frac{9}{2}n\sqrt{n}$.
\end{theorem}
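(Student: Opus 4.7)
The plan is to construct a CSA of $\mathrm{M}_n(k)$ of dimension $n + 2m$ by isolating a single large simple summand. Let $j$ be the largest positive integer with $j(j-1) \leq 2m$, and put $h = 2m - j(j-1)$, so that $0 \leq h < 2j$ by maximality of $j$. Let $\mathcal{B}$ be a CSA of $\mathrm{M}_{\mathcal{G}(h)}(k)$ of dimension $\mathcal{G}(h) + h$ obtained from the greedy decomposition of $h$. Then the block-diagonal algebra
\[ \mathrm{M}_j(k) \oplus \mathcal{B} \oplus \underbrace{\mathrm{M}_1(k) \oplus \cdots \oplus \mathrm{M}_1(k)}_{n - j - \mathcal{G}(h) \text{ copies}} \]
sits inside $\mathrm{M}_n(k)$ and has dimension $j^2 + h + (n - j) = n + j(j-1) + h = n + 2m$, provided that $\mathcal{G}(h) \leq n - j$. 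The entire plan reduces to verifying this single inequality.

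Combining $(j-1)^2 \leq j(j-1) \leq 2m \leq n^2 - \tfrac{9}{2}n\sqrt{n}$ with the identity $\bigl(n - \tfrac{9\sqrt{n}}{4}\bigr)^2 = n^2 - \tfrac{9}{2}n\sqrt{n} + \tfrac{81n}{16}$ yields $j - 1 \leq n - \tfrac{9\sqrt{n}}{4}$, and hence $n - j \geq \tfrac{9\sqrt{n}}{4} - 1$ (at least $32$ for $n \geq 225$). Applying Proposition \ref{greedy} together with $h < 2j$ gives
\[ \mathcal{G}(h) \leq \max\!\left\{ \tfrac{3}{2}\sqrt{h},\; 38\right\} \leq \max\!\left\{ \tfrac{3}{2}\sqrt{2j - 2},\; 38\right\}. \]
When the first argument dominates, $\mathcal{G}(h) \leq n - j$ is equivalent to $(n - j)^2 \geq \tfrac{9(j-1)}{2}$; substituting the bounds on $j - 1$ and $n - j$ reduces this to a polynomial inequality in $\sqrt{n}$ with comfortable slack for all $n \geq 225$. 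When the second argument dominates we have $h \leq 640$, and the conclusion is immediate as soon as $n - j \geq 38$.

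The one genuine obstacle is the residual case $h \leq 640$ together with $n - j \leq 37$. Here $j \geq n - 37$, which combined with $j(j-1) \leq n^2 - \tfrac{9}{2}n\sqrt{n}$ forces $\tfrac{9}{2}n\sqrt{n} \leq 75n - 1406$, restricting $n$ to a narrow range just above $225$. In this exceptional window I would sharpen Proposition \ref{greedy} by iterating the greedy step: writing $h = t_0(t_0 - 1) + h_1$ with $t_0$ maximal yields $t_0 \leq \sqrt{h} + 1$ and $h_1 < 2t_0$, and writing $h_1 = t_1(t_1 - 1) + h_2$ reduces $h_2$ to a small constant whose $\mathcal{G}$-value is easily tabulated. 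The resulting estimate $\mathcal{G}(h) \leq t_0 + t_1 + \mathcal{G}(h_2)$ comes in under $n - j$ throughout this finite exceptional region, completing the proof.
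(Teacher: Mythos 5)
Your construction is a genuinely different route from the paper's. The paper covers the target range by a union of overlapping intervals $S_{j}=[\,n+j(j-1),\,n+j(j-1)+\frac{4}{9}(n-j)^{2}\,]$ and analyses when consecutive intervals meet, whereas you fix the target $2m$, peel off the single maximal block $j$ with $j(j-1)\leq 2m$, and realise the small remainder $h<2j$ greedily in the remaining $n-j$ rows. Both arguments rest on Proposition \ref{greedy}, and your main case is correct and arguably cleaner: the dimension count $j^{2}+h+(n-j)=n+2m$ is right, the bound $j-1\leq n-\frac{9}{4}\sqrt{n}$ (hence $n-j\geq\frac{9}{4}\sqrt{n}-1$) is valid, and the reduction of $\frac{3}{2}\sqrt{2j-2}\leq n-j$ to $(n-j)^{2}\geq\frac{9}{2}(j-1)$ holds with room to spare, since $\bigl(\frac{9}{4}\sqrt{n}-1\bigr)^{2}-\frac{9}{2}\bigl(n-\frac{9}{4}\sqrt{n}\bigr)=\frac{9}{16}n+\frac{45}{8}\sqrt{n}+1>0$. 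This trades the paper's interval-overlap computation for a pointwise construction, at the cost of having to treat the $38$-branch of Proposition \ref{greedy} by hand near the boundary.

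That boundary treatment is where there is a genuine gap. In the residual case ($h\leq 640$ and $n-j\leq 37$) the estimate you propose does not, as stated, come in under $n-j$: two greedy steps applied to an arbitrary even $h\leq 640$ give only $\mathcal{G}(h)\leq 25+7+\mathcal{G}(h_{2})\leq 39$, and indeed $\mathcal{G}(640)=38$ and $\mathcal{G}(592)=24+6+3+2+2=37$, while your own constraint forces $n-j=37$ exactly in this window (for $n\geq 225$ the inequality $\frac{9}{2}n\sqrt{n}\leq(2c+1)n-c(c+1)$ already fails for every $c\leq 36$), so a bound of $38$ or $39$ is not good enough. The missing step is to push your inequality one line further: with $j=n-37$ one gets $h\leq 2m-j(j-1)\leq 75n-1406-\frac{9}{2}n\sqrt{n}\leq 282$ throughout the exceptional window $225\leq n\lesssim 235$. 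Once $h\leq 282$ is in hand the case closes easily, e.g.\ the first greedy term is at most $17$ and the remainder is an even number at most $32$, whose greedy width is at most $10$, so $\mathcal{G}(h)\leq 27\leq 37$. With that observation added (or, alternatively, a tabulation of $\mathcal{G}(h)$ for even $h\leq 638$, whose maximum is $37$, together with the exclusion of $h=640$), your proof is complete; without it, the final paragraph is an assertion rather than an argument, and the coarse bounds it suggests actually fail by one or two.
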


\begin{proof}
Proposition \ref{greedy} shows that $\mathrm{M}_{n}(k)$ contains a CSA of dimension $n + 2m$ whenever we have $n \geq \max\{\frac{3}{2}\sqrt{2m}, 38\}$. For any $n \geq 38$, we invert this bound to find that $\mathrm{M}_{n}(k)$ contains CSAs of dimension $n + 2m$ for all $2m \leq \frac{4}{9} n^{2}$.

For each $1 \leq j \leq n-38$ we define the interval
\[ S_{j} = \left[ n + j(j-1), n + j(j-1) + \frac{4}{9} (n-j)^{2}\right]\,,\]
which is the set of dimensions of CSAs of $\textrm{M}_{n}(k)$ having a
$j\times j$ block in the upper left corner, and a CSA of width $(n-j)$ in the
lower right. 

Let us examine the behaviour of the function $f(j) = n + j(j-1) + \frac{4}{9} (n-j)^{2}$, where we continue to hold $n$ fixed. 
Simplifying we can write $f(j) = \frac{13}{9} j^{2} - \left( \frac{8}{9}n + 1\right) j + \frac{4}{9} n^{2} + n$. Taking the derivative with 
respect to $j$, we find that $f(j)$ reaches a local minimum when $j = \frac{4}{13}n - \frac{9}{26}$. So for values of $j$ in the interval 
$\left[ \frac{4}{13} n, n-38\right]$ the maximum entry of $S_{j}$ increases with $j$. 

Now we establish a sufficient condition on $j$ for the intervals $S_{j}$ and $S_{j+1}$ to overlap.
This occurs provided that the inequality $n + j(j-1) + \frac{4}{9} (n-j)^{2} \geq n + j(j+1)$ holds. 
To find the largest value of $j$ for which this occurs, we rearrange this condition as a quadratic in $j$:
\[ 4j^{2} - (8n+18)j + 4n^{2} \geq 0\,. \]
Solving using the quadratic formula, and simplifying the results, the roots of the polynomial are 
$n + \frac{9}{4} \pm \frac{3}{2} \sqrt{ n + \frac{27}{4}}$. The leading coefficient of the quadratic 
is positive, so the sets $S_{j}$ and $S_{j+1}$ intersect when $j$ is an integer in the interval 
$\left[0, n + \frac{9}{4} - \frac{3}{2} \sqrt{ n + \frac{27}{4}}\right]$. It will be convenient to 
use a slightly smaller upper bound for the interval of the form $j_{\textrm{max}} = n - \frac{3}{2}\sqrt{n}$ 
(a computation shows that this bound is valid when $n \geq \frac{9}{4}$).

We simplify the lower bound on elements of $S_{j_{\textrm{max}}}$ to get
\[ n + (n-\frac{3}{2}\sqrt{n})(n-\frac{3}{2}\sqrt{n}-1) = n^{2} - \frac{9}{2}n\sqrt{n} + \frac{9}{4} n - \frac{3}{2}\sqrt{n}\,. \]
(Note that taking the upper bound here would only improve our estimate by a linear term.) Again, it is convenient to replace this bound 
with the simpler form $n + n^{2} - \frac{9}{2} n\sqrt{n}$, expressing the upper bound on the dimension in the form $n + 2m$ yields our result.

To complete the proof, we need to enforce the conditions $j_{\textrm{max}} \leq n - 38$ and 
\[  n + j_{\textrm{max}}(j_{\textrm{max}}+1) + \frac{4}{9} (n-j_{\textrm{max}})^{2} \geq n + n^{2} - \frac{9}{2} n\sqrt{n}\]
simultaneously. Substituting for $j_{\textrm{max}}$, approximating $\frac{4}{9}(38)^{2}$ by $642$ and simplifying, we obtain the inequality 
\[ 75 n - 2048 \leq \frac{9}{2}n\sqrt{n}\,. \]
Making the substitution $y^{2} = n$, we obtain a cubic $9y^{3} - 250y^{2} + 4096 \geq 0$. Solving numerically, this polynomial inequality holds for all $y \geq 15$, and hence the result holds for $n \geq 225$.
\end{proof}

\section{An open problem}

One could carry out a similar analysis for not-necessarily-connected
subalgebras of $\textrm{M}_{n}(k)$. Taking unions of the dimensions of CSAs
established in Theorem \ref{main} of $M_{2}(k), \ldots, \textrm{M}_{n}(k)$ gives the following result.

\begin{corollary} \label{gencase}
For every $n \geq 49$, every integer in $[0, n^{2}-\frac{9}{2}n\sqrt{n} - 2n]$
is the dimension of a semi-simple subalgebra of $\mathrm{M}_{n}(k)$. As $n
\rightarrow \infty$ the proportion of integers in $[0, n^{2}]$ which are the
dimension of a semi-simple subalgebra of $\textrm{M}_{n}(k)$ tends to $1$.
\end{corollary}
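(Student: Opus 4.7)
The plan is to enlarge the set of admissible dimensions by embedding CSAs of smaller matrix algebras $\mathrm{M}_j(k)$ into the upper-left $j\times j$ block of $\mathrm{M}_n(k)$, with zeros elsewhere. The resulting subalgebra is isomorphic as a $k$-algebra to the original CSA, hence semi-simple and of the same dimension. Consequently the set of dimensions of semi-simple subalgebras of $\mathrm{M}_n(k)$ contains $\bigcup_{j=0}^{n} \mathcal{C}(j)$, and it is enough to show that this union covers the interval $[0,\, n^2 - \frac{9}{2}n\sqrt{n} - 2n]$.

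By Lemma \ref{parity}, each $\mathcal{C}(j)$ consists of integers of a single parity, so I would apply Theorem \ref{main} twice, once with $j = n$ and once with $j = n-1$, to produce both parities simultaneously. This places every integer of the parity of $n$ lying in $[n,\, n + n^2 - \frac{9}{2}n\sqrt{n}]$ inside $\mathcal{C}(n)$, and every integer of the opposite parity lying in $[n-1,\, (n-1) + (n-1)^2 - \frac{9}{2}(n-1)\sqrt{n-1}]$ inside $\mathcal{C}(n-1)$. Expanding $(n-1)^2 + (n-1) = n^2 - n$ and using $(n-1)\sqrt{n-1} \leq n\sqrt{n}$ shows that the second upper bound is at least $n^2 - n - \frac{9}{2}n\sqrt{n}$, which comfortably exceeds the target value $n^2 - \frac{9}{2}n\sqrt{n} - 2n$; the two intervals therefore cover every integer in $[n-1,\, n^2 - \frac{9}{2}n\sqrt{n} - 2n]$. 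For $\ell \in [0, n-1]$ the all-ones partition of $\ell$ yields the diagonal CSA of $\mathrm{M}_\ell(k)$ of dimension $\ell$, so $\ell \in \mathcal{C}(\ell)$, completing the coverage.

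The asymptotic density claim then falls out immediately. The first statement provides at least $n^2 - \frac{9}{2}n\sqrt{n} - 2n + 1$ realised dimensions in $[0, n^2]$, and dividing by $n^2 + 1$ gives a ratio bounded below by $1 - O(n^{-1/2})$, which tends to $1$ as $n \to \infty$.

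The main obstacle is the lower threshold $n \geq 49$, since the clean argument above relies on Theorem \ref{main} applying to both $j = n$ and $j = n-1$ and so strictly needs $n \geq 226$. For the range $49 \leq n \leq 225$, I expect one must either verify the claim directly using the recursion in Lemma \ref{recursive}, or else substitute Proposition \ref{greedy} for Theorem \ref{main} in the two-interval argument (which should suffice up to roughly $n = 73$, where $(n-1) + \frac{4}{9}(n-1)^2$ still dominates the target) and dispatch the remaining finite window of values of $n$ computationally. For the asymptotic conclusion in isolation, no such computation is needed.
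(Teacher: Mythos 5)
Your proposal follows essentially the same route as the paper's own proof: apply Theorem \ref{main} to $\mathrm{M}_{n}(k)$ and, via embedding into a corner block, to $\mathrm{M}_{n-1}(k)$ to capture both parities, realise $[0,n-1]$ by diagonal algebras, and conclude from the comparison $(n-1)^{2} - \frac{9}{2}(n-1)\sqrt{n-1} \geq n^{2} - \frac{9}{2}n\sqrt{n} - 2n$, with the density claim following at once. Your remark that the stated threshold $n \geq 49$ is not justified by Theorem \ref{main} alone (which needs $n \geq 225$, so $n-1$ needs $n \geq 226$) identifies a gap the paper's own proof silently passes over, and your proposed remedies (using Proposition \ref{greedy} directly or checking the finite range via Lemma \ref{recursive}) are the natural ways to close it.
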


\begin{proof}
All integers in the interval $[0, \ldots, n-1]$ may be realised by algebras of diagonal matrices. By Theorem \ref{main}, every integer with the same parity as $n$ in the interval $[n, n^{2}-\frac{9}{2}n\sqrt{n}]$ is the dimension of a semi-simple subalgebra of $\mathrm{M}_{n}(k)$. Similarly, every integer of the opposite parity to $n$ in the interval $[n-1, (n-1)^{2} - \frac{9}{2}(n-1)\sqrt{n-1}]$ is the dimension of a semi-simple subalgebra. Since $n^{2}-\frac{9}{2}n\sqrt{n}-2n \leq (n-1)^{2} - \frac{9}{2}(n-1)\sqrt{(n-1)}$ the result follows.
\end{proof}

We write $\textrm{gap}(n)$ for the first integer which is \textit{not} the dimension of a semi-simple subalgebra of $\mathrm{M}_{n}(k)$. 
We conjecture that there exist  constants $0 \leq \alpha \leq \beta \leq \frac{9}{2}$ such that
\[ \liminf_{n\rightarrow \infty} \frac{n^{2} - \textrm{gap}(n)}{n^{3/2}} = \alpha \]
and
\[ \limsup_{n\rightarrow \infty} \frac{n^{2} - \textrm{gap}(n)}{n^{3/2}} = \beta \,.\]
Computations up to $n = 600$ suggest that the function $n^{2} - \frac{13}{4}n\sqrt{n} - \textrm{gap}(n)$ 
is (mostly) positive for large $n$, and that $n^{2} - \frac{7}{2}n\sqrt{n} - \textrm{gap}(n)$ is (mostly) negative. 
We propose the problem of finding $\alpha$ and $\beta$ explicitly.


\bibliographystyle{abbrv}
\flushleft{
\bibliography{Biblio2018}
}

\end{document}